\documentclass{article}
\usepackage[utf8]{inputenc}
\usepackage[margin=1in]{geometry}
\usepackage{amsmath, amssymb, bbm, amsthm, graphicx, etoolbox, subcaption}
\usepackage[hidelinks]{hyperref}
\usepackage[capitalize]{cleveref}

\newcommand{\mc}{\mathcal}

\newtheorem{thm}{Theorem}[section] 

\newtheorem{prop}[thm]{Proposition}
\newtheorem{fact}[thm]{Fact}
\newtheorem{conj}[thm]{Conjecture}
\usepackage{tikz}

\setlength{\parskip}{0.5em}
\usepackage{biblatex}
\addbibresource{bib.bib}

\title{Unimodality of a refinement of Lassalle's sequence}
\author{Mihir Singhal\thanks{Massachusetts Institute of Technology, Cambridge, MA 02139. Email: \href{mailto:mihirs@mit.edu}{\nolinkurl{mihirs@mit.edu}}.}}
\date{} 

\begin{document}

\maketitle

\begin{abstract}
Defant, Engen, and Miller defined a refinement of Lassalle's sequence $A_{k+1}$ by considering uniquely sorted permutations of length $2k+1$ whose first element is $\ell$. They showed that each such sequence is symmetric in $\ell$ and conjectured that these sequences are unimodal. We prove that the sequences are unimodal.
\end{abstract}

\section{Introduction}
We study a refinement of Lassalle's sequence introduced by Defant, Engen, and Miller \cite{dem}. Lassalle's sequence was originally defined by Lassalle in \cite{lassalle} by the recurrence
\[A_m = (-1)^{m-1}C_m + \sum_{j-1}^{m-1} (-1)^{j-1} \binom{2m-1}{2m-2j-1}A_{m-j}C_j,\]
with the initial condition $A_1 = 1$, and where the $C_k = \binom{2k}{k}/(k+1)$ are the Catalan numbers. In \cite{lassalle} 
Lassalle proved the sequence had positive terms, and the sequence has been further explored in \cite{cgw, josuat-verges, tevlin, wz}. This sequence also has relations with noncommutative probability: $(-1)^{m-1}A_m$ is the $(2m)$-th classical cumulant of the standard semicircular probability distribution. More details about the connection between noncommutative probability and stack sorting can be found in \cite{defant}.

We are interested in combinatorial interpretations of Lassalle's sequence, particularly those found by Josuat-Verg\`es in \cite{josuat-verges} and by Defant, Engen, and Miller in \cite{dem}.

Defant, Engen, and Miller's interpretation of Lassalle's sequence came chronologically after that of Josuat-Verg\`es, but we will discuss it first. The interpretation involves the \textit{stack-sorting map}, which was originally defined by West \cite{west} as a slight modification of an algorithm originally defined by Knuth \cite{knuth}.
Since we will not end up working directly with this map, we will not fully define the map, instead referring readers to \cite{dem} for the definition. Essentially, the stack-sorting map ``partially sorts" a permutation, in such a way that any permutation can be sorted via enough applications of the stack-sorting map. If $\pi$ is a permutation, let $s(\pi)$ denote its image under stack sorting. We say that a permutation is \textit{uniquely sorted} if there is a unique permutation which stack-sorts to it. That is to say, $\pi$ is uniquely sorted if $s^{-1}(\pi)$ has size 1. (Uniquely sorted permutations have also been studied further in \cite{defant2, mularczyk}.) Defant, Engen, and Miller proved in \cite{dem}
that $A_{k+1}$ counts the total number of uniquely sorted permutations of length $2k+1$. Furthermore, they defined the sequences $(A_{k+1}(\ell))_{1 \le \ell \le 2k+1}$, letting $A_{k+1}(\ell)$ equal the number of uniquely sorted permutations of length $2k+1$ whose first element is $\ell$. Note that the sum of each such sequence is $A_{k+1}$, so these may be regarded as refinements of Lassalle's sequence. They proved that each such sequence is symmetric (in $\ell$), and conjectured that these sequences are unimodal, and furthermore, log-concave. In this paper we will prove the former.
\begin{thm} \label{thm:main}
For every $k$, the sequence $(A_{k+1}(\ell))_{1 \le \ell \le 2k+1}$ is unimodal.
\end{thm}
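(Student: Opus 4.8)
The plan is to combine the symmetry already established in \cite{dem} with an explicit injection. Because $A_{k+1}(\ell)=A_{k+1}(2k+2-\ell)$, the sequence is unimodal if and only if it is nondecreasing up to its midpoint, i.e.\ $A_{k+1}(\ell)\le A_{k+1}(\ell+1)$ for all $1\le \ell\le k$. Writing $U(\ell)$ for the set of uniquely sorted permutations of length $2k+1$ with first entry $\ell$, it thus suffices to produce, for each $\ell\le k$, an injection $U(\ell)\hookrightarrow U(\ell+1)$.

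My candidate is the map $\tau$ that transposes the two values $\ell$ and $\ell+1$ (leaving the value at every other position in place); it carries the first entry from $\ell$ to $\ell+1$ and is an involution, so injectivity is automatic and the only issue is whether its image lies in $U(\ell+1)$. The key elementary observation is that, since no value lies strictly between $\ell$ and $\ell+1$, applying $\tau$ changes the relative order of only the two moved points, and hence changes a descent only if $\ell$ and $\ell+1$ occupy adjacent positions. Therefore, whenever value $\ell+1$ does not sit immediately after the initial $\ell$, the permutation $\tau(\pi)$ has the same descent set as $\pi$, and in particular still has exactly $k$ descents. Running the same observation inside $U(\ell+1)$ shows that $\tau$ restricts to a bijection between the ``non-adjacent'' elements of $U(\ell)$ and those of $U(\ell+1)$, provided $\tau$ preserves unique sortedness; this reduces the whole inequality to comparing the numbers of uniquely sorted permutations that begin with $\ell,\ell+1$ versus $\ell+1,\ell$.

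Two steps then remain. First, I must show that $\tau$ preserves unique sortedness in the non-adjacent case. Here I would invoke the characterization that a length-$(2k+1)$ permutation is uniquely sorted exactly when it has $k$ descents and fertility $|s^{-1}(\cdot)|=1$, and track the unique valid hook configuration through the height swap: because only the points at heights $\ell$ and $\ell+1$ move and nothing crosses between them, the hooks and the regions they bound should transfer essentially verbatim, so that $\tau(\pi)$ again admits a single valid hook configuration with only trivial regions. Second, for the residual comparison I would treat the permutations starting $\ell,\ell+1$ by a dedicated local move---relocating the entry $\ell+1$ across the descent block it heads and then swapping---designed to land among the permutations starting $\ell+1,\ell$ while preserving both the descent count and the hook structure; iterating this idea on the prefix is a natural way to close the case and to keep the combined map globally injective.

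I expect the fertility-preservation step to be the main obstacle. Valid hook configurations are a global feature of the plot, so even a swap that fixes the descent set could in principle create or destroy an admissible placement of hooks away from the moved points; ruling this out requires a careful local-to-global analysis of how hooks attach to the two moved points and of the sizes of the regions they delimit, and the same delicacy reappears when reconciling the two cases into a single injection. Should a clean invariance lemma prove elusive, a fallback is to derive a recurrence expressing $A_{k+1}(\ell)$ from the first-entry-refined counts at level $k$ and to propagate unimodality by induction, though this appears to demand a variation-diminishing transition kernel that is not evidently available.
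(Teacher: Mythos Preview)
Your proposal is a plan with two explicitly deferred steps, and both are genuine gaps rather than routine verifications. The decisive one is fertility preservation under the swap $\tau$. You note correctly that in the non-adjacent case $\tau$ fixes the descent set, and then assert that the unique valid hook configuration ``should transfer essentially verbatim''; but hooks are sensitive to precisely the relative height you are changing. For instance, if position $1$ is a descent and its hook in the unique configuration of $\pi$ lands at the point of height $\ell+1$, then after applying $\tau$ that hook is no longer admissible, since its start now sits at height $\ell+1$ while its intended endpoint has dropped to height $\ell$; the configuration has to be rebuilt, and nothing you wrote controls how many valid configurations the rebuilt picture admits, nor the sizes of the regions entering the fertility product. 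So $\tau$ has not been shown to carry $U(\ell)$ into $U(\ell+1)$ at all. The residual ``adjacent'' comparison is even less developed: ``relocating $\ell+1$ across the descent block it heads'' is not a defined map, and there is no argument for injectivity, for landing among uniquely sorted permutations, or for compatibility with $\tau$ on the complement so that the combined map is injective.

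The paper takes an entirely different route. It passes to the Josuat-Verg\`es model, where $A_{k+1}(\ell)$ counts matchings of $\{0,\dots,2k+1\}$ equipped with a root-connected orientation of the crossing graph rooted at $\{0,\ell\}$, and then \emph{generalizes} to an arbitrary root block $S$, setting $A_{k+1}(S)=|\widetilde{\mc M}_S(|S|+2k)|$. The key lemma (\cref{prop:recursion}) is an exact identity expressing the first difference obtained by shifting one vertex of $S$ by one as a signed sum of values $A_k(S\cup\{b\})$. This is exactly the ``recurrence~$\ldots$ propagate unimodality by induction'' idea you mention as a fallback and then set aside; what makes it succeed is not a variation-diminishing kernel but a reflection symmetry of $A_{k+1}(S)$ in each vertex (itself derived from the recursion), which allows one to pair the positive and negative summands and apply the inductive hypothesis termwise. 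Your direct-injection scheme has no comparable inductive handle on its defect terms, and without the generalization to larger root blocks it is hard to see how one would manufacture one.
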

Using \cref{prop:recursion}, which is a recursion-like identity for a generalization of these sequences, we also (with computer assistance) verify the following.
\begin{prop}
For all $k \le 53$, the sequence $(A_{k+1}(\ell))_{1 \le \ell \le 2k+1}$ is log-concave.
\end{prop}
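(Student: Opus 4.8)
The plan is to treat this as a finite computational verification powered by the recursion of \cref{prop:recursion}. Since direct enumeration of uniquely sorted permutations of length $2k+1$ is hopeless once $k$ exceeds a handful — their total count is Lassalle's $A_{k+1}$, which grows super-exponentially — the entire point of \cref{prop:recursion} is to let us compute the refined values $A_{k+1}(\ell)$ without ever listing a permutation. Concretely, I would set up a dynamic program that builds, for each $k$ from the base case up to $53$, the full table of generalized quantities appearing in \cref{prop:recursion}, and then reads off the vector $(A_{k+1}(\ell))_{1 \le \ell \le 2k+1}$ as the appropriate slice of that table.

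First I would implement the recursion of \cref{prop:recursion} verbatim, seeding it with the explicitly checkable small cases and then iterating $k$ upward; since each step expresses the generalized values at level $k$ as a finite, polynomial-size combination of values at smaller levels, the whole computation runs in time polynomial in $k$ and returns exact integers. Second, from the generalized table I would extract $A_{k+1}(\ell)$ for every $k \le 53$ and every $\ell$. Third, for each such $k$ I would directly test the log-concavity inequalities $A_{k+1}(\ell)^2 \ge A_{k+1}(\ell-1)\,A_{k+1}(\ell+1)$ across the admissible range of $\ell$; by the symmetry $A_{k+1}(\ell) = A_{k+1}(2k+2-\ell)$ it suffices to check $\ell$ up to the center $k+1$, and this symmetry doubles as a consistency check that the table is being indexed correctly.

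The key implementation point — and essentially the only subtlety — is that these numbers are enormous (hundreds of digits once $k$ is near $53$), while the log-concavity ratios $A_{k+1}(\ell)^2 \big/ \bigl(A_{k+1}(\ell-1)A_{k+1}(\ell+1)\bigr)$ can sit very close to $1$, so floating-point arithmetic would be untrustworthy and could silently flip a near-equality. I would therefore carry out the entire computation in exact arbitrary-precision integer arithmetic, so that each inequality is decided with certainty. The main obstacle is thus not mathematical depth but faithfulness of the code to \cref{prop:recursion}: I would guard against transcription errors in the base cases and index ranges by cross-validating, for small $k$, against the known terms of Lassalle's sequence via the identity $\sum_{\ell} A_{k+1}(\ell) = A_{k+1}$ and against the established symmetry in $\ell$, before trusting the output for the large values of $k$.
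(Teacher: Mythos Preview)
Your proposal is correct and matches the paper's own treatment: the paper likewise ``proves'' this proposition purely by computer verification, using the recursion of \cref{prop:recursion} to compute the values $A_{k+1}(\ell)$ efficiently and then checking the log-concavity inequalities for each $k\le 53$. One small caveat: your claim that the dynamic program runs in time polynomial in $k$ is not obviously justified, since the recursion pulls in generalized values $A_j(S)$ with $|S|$ growing as $k$ decreases and the number of distinct such $S$ can blow up combinatorially; the paper makes no polynomial-time claim, and $k=53$ may simply reflect the practical limit of the computation rather than a polynomial bound.
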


\section{Orientations of partition crossing graphs}
In this section we will describe Josuat-Verg\`es's interpretation of the Lassalle sequence, which will be useful to us in order to prove \cref{thm:main}. First we will need some definitions.

Let $\mc P(n)$ denote the set of partitions of $\{0, \dots, n-1\}$, and if $n$ is even, also let $\mc M(n) \subset \mc P(n)$ denote the set of matchings on $\{0, \dots, n-1\}$, where a \textit{matching} is just a partition containing only blocks of size 2.

If $\rho \in \mc P(n)$ and $B, B'$ are blocks in $\rho$, then we say that $B$ and $B'$ form a \textit{crossing} if there exist $i, k \in B$ and $j, \ell \in B'$ such that either $i < j < k < \ell$ or $i > j > k > \ell$. If we put the elements of $\{0, \dots, n-1\}$ in order on a circle, and represent each block by the polygon whose vertices are its elements, then two blocks form a crossing exactly when their corresponding polygons intersect. Note that in the special case where $\rho$ is a matching, all its blocks are represented by line segments.

Now define the \textit{crossing graph} $G(\rho)$ of a partition $\rho$ to be the graph whose vertices are the blocks of $\rho$ and with an edge between two blocks if and only if they form a crossing. Define an orientation $r$ of the edges of $G(\rho)$ to be \textit{root-connected} with \textit{root} $B$ if it is acyclic and the block $B \in \rho$ is the only source in the orientation. Equivalently, $r$ is root-connected with root $B$ if it is acyclic and there exists a path from $B$ to every other vertex in $G(\rho)$. (Note that $G(\rho)$ must be connected in order for it to have a root-connected orientation.) Greene and Zaslavsky in \cite{gz} proved that the number of root-connected orientations of $G(\rho)$ with any fixed root is $T_{G(\rho)}(1, 0)$, where $T_{G(\rho)}$ is the Tutte polynomial of $G(\rho)$, defined in \cite{aigner}.

Let $\widetilde{\mc P}(n)$ denote the set of pairs $(\rho, r)$, where $\rho \in \mc P(n)$ and $r$ is a root-connected orientation of $G(\rho)$, where the root is the block containing 0. Similarly let $\widetilde{\mc M}(n)$ denote the subset of elements $(\rho, r)$ of $\widetilde{\mc P}(n)$ such that $\rho\in \mc M(n)$. Josuat-Verg\`es proved that Lassalle's sequence $A_{k+1}$ counts the number of elements of $\widetilde{\mc M}(2k+2)$. Furthermore, Defant, Engen, and Miller proved by bijection that the refinement $A_{k+1}(\ell)$ can also be counted by root-connected orientations of matchings:
\begin{prop}[\cite{dem}]
The number of pairs $(\rho, r)$ of matchings $\rho \in \mc M(2k+2)$ and root-connected orientations $r$ of $G(\rho)$ with root $\{0, \ell\}$ is $A_{k+1}(\ell)$.
\end{prop}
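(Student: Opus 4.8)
The plan is to reduce the claim to an explicit statistic-preserving bijection, exploiting the two unrefined theorems already recorded in the excerpt. By Josuat-Vergès's result the total number of pairs $(\rho,r)\in\widetilde{\mc M}(2k+2)$ is $A_{k+1}$, and by Defant–Engen–Miller's result the number of uniquely sorted permutations of length $2k+1$ is also $A_{k+1}$. Since every matching of $\{0,\dots,2k+1\}$ places $0$ in a unique block $\{0,\ell\}$ with $\ell\in\{1,\dots,2k+1\}$, the sets of pairs with a fixed root partner of $0$ partition $\widetilde{\mc M}(2k+2)$; likewise the uniquely sorted permutations partition according to their first entry, and $\sum_\ell A_{k+1}(\ell)=A_{k+1}$. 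Hence it suffices to construct a bijection
\[
\Phi:\{\pi\in S_{2k+1}:\pi\text{ uniquely sorted}\}\longrightarrow\widetilde{\mc M}(2k+2)
\]
with the property that $\pi_1=\ell$ if and only if the root block of $\Phi(\pi)$ is $\{0,\ell\}$; the counting statement then follows by restricting $\Phi$ to each part.

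For the construction of $\Phi$ I would attach to a uniquely sorted $\pi$ its canonical combinatorial skeleton — for instance the tree of its unique stack-sorting preimage, equivalently the (unique) valid hook configuration witnessing fertility one — and read off from this skeleton both a matching $\rho$ on $\{0,\dots,2k+1\}$ and an orientation $r$ of the crossing graph $G(\rho)$. The orientation of each edge would be dictated by the order in which the two crossing blocks are encountered in a canonical traversal of the skeleton, and I would check that the block containing the minimal point $0$ is the unique source, so that $r$ is root-connected with root the block containing $0$; that the result lies in $\widetilde{\mc M}(2k+2)$ and that $\Phi$ is invertible should follow from the recursive structure of the skeleton together with the Greene–Zaslavsky count $T_{G(\rho)}(1,0)$ for root-connected orientations.

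The hard part will be verifying the statistic correspondence $\pi_1=\ell\Longleftrightarrow\{0,\ell\}$, because the first entry is a \emph{global} feature of $\pi$ whereas $\Phi$ is most naturally defined by local and recursive rules on $G(\rho)$. The most robust route I foresee is a peeling argument coupled with induction on $k$: on the matching side, the root block $\{0,\ell\}$ is the unique source of the acyclic orientation, so deleting it and reindexing produces a strictly smaller element of $\widetilde{\mc M}$; on the permutation side, the value $\ell=\pi_1$ governs a decomposition of $\pi$ into its subwords of entries below and above $\ell$, which again yields smaller uniquely sorted data. I would match these two decompositions and close the induction, the delicate point being to confirm that the traversal building $\rho$ reads off $\pi_1$ as the label $\ell$ of the root partner at every stage, and that the induced orientation remains root-connected with the correct root throughout.
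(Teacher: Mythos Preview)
The paper does not itself prove this proposition; it is quoted from \cite{dem}, where (as the paper says) it is established by an explicit bijection. Your outline correctly identifies that a statistic-preserving bijection is what is needed, and even names the right intermediate object (valid hook configurations), but what you have written is a plan rather than a proof, and the inductive ``peeling'' scheme you propose has a structural gap.

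On the matching side the peeling step fails as stated. If $(\rho,r)\in\widetilde{\mc M}(2k+2)$ has root $\{0,\ell\}$ and you delete this block, the remaining crossing graph need not be connected---the root may be a cut vertex of $G(\rho)$---and even on a single component the restricted orientation can acquire several sources: every block whose only incoming edge in $r$ came from $\{0,\ell\}$ becomes a new source. So deletion does not yield ``a strictly smaller element of $\widetilde{\mc M}$'' but an unordered collection of smaller rooted pieces, and any induction must track this branching together with the cyclic positions of the pieces. The parallel decomposition of $\pi$ by values above and below $\pi_1=\ell$ has the same defect: the two subwords are not in general uniquely sorted permutations of the required odd lengths, so neither side reduces to a single smaller instance. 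The content that is actually missing is the explicit rule turning a valid hook configuration into an arc diagram together with an orientation of its crossings in such a way that the hook attached to position $1$ becomes the block $\{0,\ell\}$; without that rule, knowing that the totals agree and invoking the Greene--Zaslavsky count does not pin down the refinement by $\ell$.
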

This is the interpretation of the sequence $A_{k+1}(\ell)$ which we will use to prove unimodality.

\section{Proof of unimodality}

We will now prove \cref{thm:main}. First we will need to define a slight generalization of $\widetilde{\mc M}(n)$, where we allow the root to be a set of any size.

For a set $S \subset \{0, \dots, n-1\}$, let $\mc M_S(n)$ denote the set of partitions of $\{0, \dots, n-1\}$ in which one of the blocks is $S$ and all other blocks have 2 elements. Let $\widetilde{\mc M}_S(n)$ denote the set of $(\rho, r)$ such that $\rho \in \mc M_S(n)$ and $r$ is a root-connected orientation of $G(\rho)$ with root $S$. Then, let $A_{k+1}(S)$ be the size of the set $\widetilde{\mc M}_S(|S| + 2k)$. We then have $A_{k+1}(\ell) = A_{k+1}(\{0, \ell\})$.

Let $n = |S| + 2k$. We first note some basic properties of the function $A_{k+1}(S)$. Each such property will be accompanied by a visual depiction of it, where the elements of $\{0, \dots, n-1\}$ are placed (equally spaced and clockwise) on a circle and $S$ is represented by a polygon whose vertices are its elements. First, the property of rotation states that we can rotate the elements of $S$ on the circle without changing $A_{k+1}(S)$.

\begin{fact}[Rotation]
We have, for any integer $r$, $A_{k+1}(S) = A_{k+1}(S + r)$, where $S+r$ denotes elementwise addition of $r$ to $S$ and elements are taken mod $n$.
\end{fact}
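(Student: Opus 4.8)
The plan is to exhibit an explicit bijection between $\widetilde{\mc M}_S(n)$ and $\widetilde{\mc M}_{S+r}(n)$ induced by rotating the ground set; the equality of cardinalities $A_{k+1}(S) = A_{k+1}(S+r)$ then follows at once. The argument works for general $r$ directly (equivalently one may prove the case $r = 1$ and iterate). Throughout I write $\omega$ for orientations, to avoid clashing with the rotation integer $r$.

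First I would define the rotation map $\phi \colon \{0, \dots, n-1\} \to \{0, \dots, n-1\}$ by $\phi(x) = (x + r) \bmod n$. This is a bijection of the ground set, so it sends a partition $\rho$ with blocks $B_1, \dots, B_m$ to the partition $\phi(\rho)$ with blocks $\phi(B_1), \dots, \phi(B_m)$, preserving all block sizes. In particular $\phi$ restricts to a bijection $\mc M_S(n) \to \mc M_{S+r}(n)$ carrying the distinguished block $S$ to the block $S + r$.

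The crux is to show that $\phi$ induces an isomorphism of crossing graphs $G(\rho) \cong G(\phi(\rho))$. Since $\phi$ is already a bijection on the vertex sets (the blocks), what remains is to check that two blocks $B, B'$ cross in $\rho$ if and only if $\phi(B), \phi(B')$ cross in $\phi(\rho)$. Here I would invoke the circular interpretation of crossings recorded in \cref{sec} above: placing $\{0, \dots, n-1\}$ equally spaced on a circle, two blocks cross exactly when the polygons on their vertex sets intersect. Rotation by $r$ is a rigid rotation of this circle, hence a self-homeomorphism preserving the intersection pattern of every pair of polygons; therefore it preserves the crossing relation, and $\phi$ is a graph isomorphism sending the vertex $S$ to the vertex $S + r$. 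Finally, any graph isomorphism transports orientations to orientations while preserving acyclicity, the set of sources, and the existence of directed paths between corresponding vertices; thus it carries root-connected orientations of $G(\rho)$ with root $S$ bijectively to root-connected orientations of $G(\phi(\rho))$ with root $S + r$. Composing with the block-level bijection gives a bijection $(\rho, \omega) \mapsto (\phi(\rho), \phi_* \omega)$ from $\widetilde{\mc M}_S(n)$ onto $\widetilde{\mc M}_{S+r}(n)$.

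The one genuinely substantive step is the rotation-invariance of the crossing relation. The subtlety is that crossings are defined through the linear order $<$, which is not manifestly invariant under $x \mapsto (x+r) \bmod n$: wrap-around past $n-1$ to $0$ could a priori create or destroy a linear crossing. The clean way around this is exactly to pass to the circular polygon-intersection description, under which invariance is geometrically immediate; if one prefers to argue directly from the linear definition, the same fact can be verified by a short case analysis for $r = 1$ according to whether a given block straddles the cut between $n-1$ and $0$, and then iterated.
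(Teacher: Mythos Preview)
Your argument is correct. The paper does not give a proof of this fact at all --- it is stated as self-evident from the circular picture and then used implicitly --- so there is no approach to compare against; your rotation bijection is exactly the natural justification, and you correctly isolate the one point requiring care (that the linear-order definition of crossing is not manifestly invariant under $x \mapsto x+r \pmod n$, whereas the equivalent polygon-intersection description is). One cosmetic issue: the reference ``\verb|\cref{sec}|'' points to nothing; you presumably mean the paragraph in the paper where the polygon interpretation of crossings is introduced.
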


\[
A_{k+1}
\begin{tikzpicture}[baseline=0]
\def \r {1};
\def \s {1.25};
\draw (0, 0) circle (\r);
\draw[red] (90-0*22.5:\r) -- (90-2*22.5:\r) -- (90-6*22.5:\r) -- (90-9*22.5:\r) -- (90-12*22.5:\r) -- (90-0*22.5:\r);
\foreach \d/\l in {{90-0*22.5}/{}, {90-2*22.5}/{}, {90-6*22.5}/{}, {90-9*22.5}/{}, {90-12*22.5}/{}} {
    \draw[fill, red] (\d:\r) circle [radius=0.025];
    \node at (\d:\s) {\l};
}
\end{tikzpicture}
\quad = \quad
A_{k+1} 
\begin{tikzpicture}[baseline=0]
\def \r {1};
\def \s {1.25};
\draw (0, 0) circle (\r);
\draw[red] (90-1*22.5:\r) -- (90-3*22.5:\r) -- (90-7*22.5:\r) -- (90-10*22.5:\r) -- (90-13*22.5:\r) -- (90-1*22.5:\r);
\foreach \d/\l in {{90-1*22.5}/{}, {90-3*22.5}/{}, {90-7*22.5}/{}, {90-10*22.5}/{}, {90-13*22.5}/{}} {
    \draw[fill, red] (\d:\r) circle [radius=0.025];
    \node at (\d:\s) {\l};
}
\end{tikzpicture}
\]

We will generally use the property of rotation implicitly throughout this proof.

The property of merging states that if $S$ contains two consecutive elements then we can merge these two points on the circle into one point, reducing the size of $S$ by 1 and also reducing $n$ by 1. Note that this doesn't affect any crossings. To state this, we will let $S = \{a_1, \dots, a_m\}$, where $m \ge 1$ and $a_1 < \dots < a_m$. We allow indices to ``wrap around" the circle, so that $a_{m+1} = a_1 + n$, and so on. 

\begin{fact}[Merging]
If $a_{j+1} = a_j + 1$, then \[A_{k+1}(\{a_1, \dots, a_m\}) = A_{k+1}(\{a_1, \dots, a_j, a_{j+2}-1, \dots, a_m-1\}).\]
\end{fact}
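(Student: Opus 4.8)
The plan is to exhibit an explicit bijection between $\widetilde{\mc M}_S(n)$ and $\widetilde{\mc M}_{S'}(n-1)$, where $S = \{a_1, \dots, a_m\}$ and $S' = \{a_1, \dots, a_j, a_{j+2}-1, \dots, a_m-1\}$, realized by ``sliding'' the vertex $a_{j+1}$ along the empty arc onto $a_j$ and then relabeling. Since $A_{k+1}(S)$ and $A_{k+1}(S')$ use the same value of $k$ and differ only in replacing $n$ by $n-1$, a size-preserving correspondence suffices. Using the Rotation property, I would first rotate so that the adjacent pair consists of two genuinely consecutive integers $a_j, a_j+1$ with $0 \le a_j < a_j+1 \le n-1$, which avoids the wrap-around bookkeeping encoded in the convention $a_{m+1} = a_1 + n$.

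First I would define the order-preserving collapse $\phi \colon \{0, \dots, n-1\} \to \{0, \dots, n-2\}$ by $\phi(t) = t$ for $t \le a_j$ and $\phi(t) = t-1$ for $t \ge a_{j+1}$. This map is injective off of $S$ (its only collision is $\phi(a_j) = \phi(a_{j+1}) = a_j$), sends $S$ to $S'$, and sends the $2k$ non-root points bijectively onto the non-root points of $\{0, \dots, n-2\}$. Applying $\phi$ blockwise therefore carries a partition $\rho \in \mc M_S(n)$ to a partition $\rho' \in \mc M_{S'}(n-1)$, where $S$ becomes $S'$ and each chord $\{x,y\}$ becomes $\{\phi(x), \phi(y)\}$, and this is a bijection $\mc M_S(n) \to \mc M_{S'}(n-1)$ whose inverse reinserts the deleted point.

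The heart of the argument is to show that $\phi$ induces a graph isomorphism $G(\rho) \cong G(\rho')$ matching the root $S$ to the root $S'$; once this holds, root-connected orientations with root $S$ correspond exactly to those with root $S'$, so the bijection on matchings upgrades to a bijection $\widetilde{\mc M}_S(n) \to \widetilde{\mc M}_{S'}(n-1)$, giving the claim. There are two types of crossings to check. For a crossing between two chords, neither endpoint lying in $S$, the relation is preserved because $\phi$ is order-preserving on chord endpoints, so the interleaving condition in the definition of a crossing, equivalently geometric intersection of the two segments, is unchanged. For a crossing between the root and a chord $\{x,y\}$ I would use the geometric description from the introduction: the polygon $S$ meets the segment $\{x,y\}$ exactly when both open arcs cut out by $x$ and $y$ contain a vertex of $S$.

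The main obstacle, and the only place where the hypothesis $a_{j+1} = a_j + 1$ is essential, is this root--chord case. The key observation is that since $a_j$ and $a_{j+1}$ are consecutive on the circle and no chord endpoint lies strictly between them, every chord $\{x,y\}$ has both $a_j$ and $a_{j+1}$ on the same one of its two arcs. Hence deleting $a_{j+1}$ while retaining $a_j$ cannot empty an arc of $S$-vertices, so each arc of $\{x,y\}$ contains a vertex of $S$ if and only if the corresponding arc of $\{\phi(x),\phi(y)\}$ contains a vertex of $S'$; therefore $S$ crosses $\{x,y\}$ if and only if $S'$ crosses $\{\phi(x),\phi(y)\}$. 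This completes the isomorphism and hence the proof. I expect the only delicate bookkeeping to be phrasing the ``both arcs'' condition cleanly in terms of the paper's linear crossing definition, which the rotation reduction is designed to make routine.
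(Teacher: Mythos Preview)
Your argument is correct. The paper does not actually prove this fact; it simply states it, with only the parenthetical remark that merging the two adjacent root vertices ``doesn't affect any crossings.'' Your bijection via the order-preserving collapse $\phi$ is precisely the formalization of that one-line observation, and your case analysis---chord--chord crossings preserved because $\phi$ is order-preserving on non-root points, root--chord crossings preserved because $a_j$ and $a_{j+1}$ always lie in the same arc determined by any chord---is the natural way to make it rigorous.
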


\[
A_{k+1}
\begin{tikzpicture}[baseline=0]
\def \r {1};
\def \s {1.3};
\draw (0, 0) circle (\r);
\draw[red] (90-0*22.5:\r) -- (90-2*22.5:\r) -- (90-5*22.5:\r) -- (90-6*22.5:\r) -- (90-9*22.5:\r) -- (90-12*22.5:\r) -- (90-0*22.5:\r);
\foreach \d/\l in {{90-0*22.5}/{}, {90-2*22.5}/{$a_{j-1}$}, {90-5*22.5}/{$a_j$}, {90-6*22.5}/{$a_j+1$}, {90-9*22.5}/{$a_{j+2}$}, {90-12*22.5}/{}} {
    \draw[fill, red] (\d:\r) circle [radius=0.025];
    \node at (\d:\s) {\scriptsize \l};
}
\end{tikzpicture}
\quad = \quad
A_{k+1}
\begin{tikzpicture}[baseline=0]
\def \r {1};
\def \s {1.3};
\draw (0, 0) circle (\r);
\draw[red] (90-0*24:\r) -- (90-2*24:\r) -- (90-5*24:\r) -- (90-8*24:\r) -- (90-11*24:\r) -- (90-0*24:\r);
\foreach \d/\l in {{90-0*24}/{}, {90-2*24}/{$a_{j-1}$}, {90-5*24}/{$a_j$}, {90-8*24}/{$a_{j+2}-1$}, {90-11*24}/{}} {
    \draw[fill, red] (\d:\r) circle [radius=0.025];
    \node at (\d:\s) {\scriptsize \l};
}
\end{tikzpicture}
\]

Now we show a sort of recursion for $A_{k+1}(S)$, which will be the main tool that we will use.

\begin{prop} \label{prop:recursion}
Suppose $m \ge 2$. Suppose that for some $j$, $a_j \le a_{j+1} - 2$. Then, we have the following identity.
\begin{multline}
A_{k+1}(\{a_1, \dots, a_{j-1}, a_j + 1, a_{j+1}, \dots, a_m\}) - A_{k+1}(\{a_1, \dots, a_{j-1}, a_j, a_{j+1}, \dots, a_m\}) \\
= \sum_{a_j < b < a_{j+1}-1} A_k(\{a_1, \dots, a_{j-1}, a_j, b, a_{j+1}-1, \dots, a_m-1\})  \\
- \sum_{a_{j-1} < b < a_j} A_k(\{a_1, \dots, a_{j-1}, b, a_j, a_{j+1}-1, \dots, a_m-1\}) \label{eq:recursion}
\end{multline}
\begin{multline*}
A_{k+1}
\begin{tikzpicture}[baseline=0]
\def \r {1};
\def \s {1.3};
\draw (0, 0) circle (\r);
\draw[red] (90-0*22.5:\r) -- (90-2*22.5:\r) -- (90-6*22.5:\r) -- (90-9*22.5:\r) -- (90-12*22.5:\r) -- (90-0*22.5:\r);
\foreach \d/\l in {{90-0*22.5}/{}, {90-2*22.5}/{$a_{j-1}$}, {90-6*22.5}/{$a_j+1$}, {90-9*22.5}/{$a_{j+1}$}, {90-12*22.5}/{}} {
    \draw[fill, red] (\d:\r) circle [radius=0.025];
    \node at (\d:\s) {\scriptsize \l};
}
\end{tikzpicture}
\quad - \quad A_{k+1}
\begin{tikzpicture}[baseline=0]
\def \r {1};
\def \s {1.3};
\draw (0, 0) circle (\r);
\draw[red] (90-0*22.5:\r) -- (90-2*22.5:\r) -- (90-5*22.5:\r) --  (90-9*22.5:\r) -- (90-12*22.5:\r) -- (90-0*22.5:\r);
\foreach \d/\l in {{90-0*22.5}/{}, {90-2*22.5}/{$a_{j-1}$}, {90-5*22.5}/{$a_j$}, {90-9*22.5}/{$a_{j+1}$}, {90-12*22.5}/{}} {
    \draw[fill, red] (\d:\r) circle [radius=0.025];
    \node at (\d:\s) {\scriptsize \l};
}
\end{tikzpicture} \\
= \sum_{a_j < b < a_{j+1}-1} A_k
\begin{tikzpicture}[baseline=0]
\def \r {1};
\def \s {1.3};
\draw (0, 0) circle (\r);
\draw[red] (90-0*24:\r) -- (90-2*24:\r) -- (90-5*24:\r) -- (90-6.5*24:\r) -- (90-8*24:\r) -- (90-11*24:\r) -- (90-0*24:\r);
\foreach \d/\l in {{90-0*24}/{}, {90-2*24}/{$a_{j-1}$}, {90-5*24}/{$a_j$},  {90-6.5*24}/{$b$}, {90-8*24}/{$a_{j+1}-1$}, {90-11*24}/{}} {
    \draw[fill, red] (\d:\r) circle [radius=0.025];
    \node at (\d:\s) {\scriptsize \l};
}
\end{tikzpicture}
\quad - \quad
\sum_{a_{j-1} < b < a_j} A_k
\begin{tikzpicture}[baseline=0]
\def \r {1};
\def \s {1.3};
\draw (0, 0) circle (\r);
\draw[red] (90-0*24:\r) -- (90-2*24:\r) -- (90-3.5*24:\r) -- (90-5*24:\r) -- (90-8*24:\r) -- (90-11*24:\r) -- (90-0*24:\r);
\foreach \d/\l in {{90-0*24}/{}, {90-2*24}/{$a_{j-1}$}, {90-5*24}/{$a_j$},  {90-3.5*24}/{$b$}, {90-8*24}/{$a_{j+1}-1$}, {90-11*24}/{}} {
    \draw[fill, red] (\d:\r) circle [radius=0.025];
    \node at (\d:\s) {\scriptsize \l};
}
\end{tikzpicture}
\end{multline*}
\end{prop}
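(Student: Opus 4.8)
The plan is to interpret both sides via root-connected orientations of crossing graphs and to reduce the identity to a clean deletion/contraction statement about the single edge of the crossing graph that is affected by moving the root vertex from $a_j$ to $a_j+1$. Write $S = \{a_1,\dots,a_m\}$ and $S' = \{a_1,\dots,a_{j-1},a_j+1,a_{j+1},\dots,a_m\}$, and write $\mathrm{rc}(G)$ for the number of root-connected orientations of a graph $G$ with its given root. Since $a_j \le a_{j+1}-2$, the point $a_j+1$ is a non-root point in every configuration for $S$, and $a_j$ is a non-root point in every configuration for $S'$; in each case it is matched to some partner $c$. I would first set up the obvious pairing of matchings: given a matching for $S$ with $a_j+1 \sim c$, replace the block $\{a_j+1,c\}$ by $\{a_j,c\}$, leaving all other blocks unchanged, to obtain a matching for $S'$. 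The key observation is that because $a_j$ and $a_j+1$ are adjacent on the circle, the two crossing graphs produced by paired matchings are identical except possibly for the single edge joining the root block to the block $w$ containing $c$: any chord avoiding $a_j,a_j+1$ has both of these points on one arc, so no crossing other than (root)--$w$ can be created or destroyed.

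Next I would classify a paired matching by the location of $c$. A short computation (both arcs cut off by the special chord must meet the root polygon) shows that if $a_j+1 < c < a_{j+1}$ then the special block crosses $S'$ but not $S$ (case 2); if $a_{j-1} < c < a_j$ it crosses $S$ but not $S'$ (case 1); and otherwise it crosses both (case 3). In case 3 the two crossing graphs coincide, so they have equally many root-connected orientations and contribute nothing to $A_{k+1}(S')-A_{k+1}(S)$. For the remaining two cases the graphs differ by exactly one edge $e=$ (root)--$w$, and here I would use the following elementary count: if $H$ is any graph with a designated root and $e$ joins the root to a vertex $w$, then $\mathrm{rc}(H+e)-\mathrm{rc}(H)$ equals the number of root-connected orientations of $H+e$ in which $w$ has in-degree $1$, its unique in-edge being $e$. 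Indeed the root is always the unique source, so $e$ points out of the root; deleting $e$ keeps the orientation acyclic and preserves root-connectedness exactly when $w$ still has another in-edge, and adding $e$ is the inverse of this. Applying this with the appropriate signs gives
\[
A_{k+1}(S')-A_{k+1}(S) \;=\; \sum_{\text{case 2}} \#\{\, w \text{ has in-degree } 1 \,\} \;-\; \sum_{\text{case 1}} \#\{\, w \text{ has in-degree } 1 \,\}.
\]

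It remains to identify each of these two sums with the corresponding sum of $A_k$ terms, and this contraction step is where I expect the real work to lie. Fixing $c$, and setting $b := c-1$ in case 2 (resp.\ $b := c$ in case 1), I would map an in-degree-$1$ root-connected orientation to a configuration with a larger root $T_b$ by promoting $c$ to the root block and deleting the other special point ($a_j$ in case 2, $a_j+1$ in case 1), then relabeling. One checks that the new root is exactly $\{a_1,\dots,a_{j-1},a_j,b,a_{j+1}-1,\dots,a_m-1\}$ in case 2 and $\{a_1,\dots,a_{j-1},b,a_j,a_{j+1}-1,\dots,a_m-1\}$ in case 1, matching \eqref{eq:recursion}, and that $b$ ranges over exactly the stated interval; summing over $b$ then produces the two sums of $A_k$ terms.

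The crux is to show this promote-and-delete map is a bijection onto all root-connected orientations with root $T_b$, and this is the main obstacle. Two facts make it work. First, a non-special block crosses $T_b = S'\cup\{c\}$ if and only if it crosses $S'$ or it crosses the special block $\{a_j,c\}$; this is the place where the adjacency of $a_j,a_j+1$ and the fact that $a_j+1\in S'$ lies on the same arc as $a_j$ are used. Second, because the root is the unique source, every edge at the merged root vertex must point outward, so the reverse map---splitting $w$ back off, orienting all of its crossings out of $w$ and the edge $e$ into $w$---is forced and well defined. Verifying that acyclicity and the unique-source property are preserved in both directions, and in particular that a block crossing both $S'$ and $w$ cannot create a directed cycle (which is exactly the constraint forcing the edge root--block to point outward), is the delicate point; once this is in place, everything else reduces to bookkeeping with the rotation and merging facts.
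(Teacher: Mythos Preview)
Your proposal is correct and follows essentially the same route as the paper. Both arguments swap $a_j\leftrightarrow a_j+1$, observe that only the crossing between the root and the block $w=\{c,\,\cdot\}$ can change, identify the surviving discrepancy as the root-connected orientations in which $w$ has the root--$w$ edge as its sole in-edge, and then contract that edge (followed by the merging property) to produce the $A_k$ terms; your packaging as a general lemma $\mathrm{rc}(H+e)-\mathrm{rc}(H)=\#\{w\text{ has unique in-edge }e\}$ is a clean way to phrase exactly the step the paper carries out by hand.
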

\begin{proof}
Let $S = \{a_1, \dots, a_{j-1}, a_j, a_{j+1}, \dots, a_m\}$ and $S' = \{a_1, \dots, a_{j-1}, a_j+1, a_{j+1}, \dots, a_m\}$. We define a correspondence between some elements of $\widetilde{\mc M}_S(n)$ and some elements of $\widetilde{\mc M}_{S'}(n)$ (where $n = m + 2k$ as usual). To find the corresponding element to any $(\rho, r) \in \widetilde{\mc M}_S(n)$, just swap $a_j$ with $a_{j}+1$ in $\rho$. Keep all orientations the same in $r$. Note that this may create or remove an edge with $S$ (which becomes $S'$); if a new edge is created, direct it away from $S'$ so that $S'$ is still a source. This correspondence is defined for all $(\rho, r)$ such that $S'$ is still the only source in the resulting partition and orientation. Note that this correspondence is injective since $S$ must be a source in $r$, so edges leaving $S$ can be deleted without loss of information. The inverse of this correspondence can be obtained by the exact same process. The correspondence is illustrated in \cref{fig:corresp}.

\begin{figure} 
\centering
\[
\begin{tikzpicture}[baseline=0]
\def \r {1};
\def \s {1.3};
\draw (0, 0) circle (\r);
\draw[red] (90-0*22.5:\r) -- (90-2*22.5:\r)  -- (90-5.5*22.5:\r) -- (90-9*22.5:\r) -- (90-12*22.5:\r) -- (90-0*22.5:\r);
\draw[blue] (90-3.5*22.5:\r) -- (90-6*22.5:\r);
\foreach \d/\l in {{90-0*22.5}/{}, {90-2*22.5}/{$a_{j-1}$}, {90-9*22.5}/{$a_{j+1}$}, {90-12*22.5}/{}} {
    \draw[fill, red] (\d:\r) circle [radius=0.025];
    \node at (\d:\s) {\scriptsize \l};
}
\draw[fill, red] (90-5.5*22.5:\r) circle [radius=0.025];
\node at (90-5.2*22.5:\s) {\scriptsize $a_j$};
\foreach \d/\l in {{90-6*22.5}/{$a_j+1$}, {90-3.5*22.5}/{$b$}} {
    \draw[fill, blue] (\d:\r) circle [radius=0.025];
    \node at (\d:\s) {\scriptsize \l};
}
\end{tikzpicture}
\longleftrightarrow
\begin{tikzpicture}[baseline=0]
\def \r {1};
\def \s {1.3};
\draw (0, 0) circle (\r);
\draw[red] (90-0*22.5:\r) -- (90-2*22.5:\r)  -- (90-6*22.5:\r) -- (90-9*22.5:\r) -- (90-12*22.5:\r) -- (90-0*22.5:\r);
\draw[blue] (90-3.5*22.5:\r) -- (90-5.5*22.5:\r);
\foreach \d/\l in {{90-0*22.5}/{}, {90-2*22.5}/{$a_{j-1}$}, {90-6*22.5}/{$a_j+1$}, {90-9*22.5}/{$a_{j+1}$}, {90-12*22.5}/{}} {
    \draw[fill, red] (\d:\r) circle [radius=0.025];
    \node at (\d:\s) {\scriptsize \l};
}
\foreach \d/\l in {{90-3.5*22.5}/{$b$}} {
    \draw[fill, blue] (\d:\r) circle [radius=0.025];
    \node at (\d:\s) {\scriptsize \l};
}
\draw[fill, blue] (90-5.5*22.5:\r) circle [radius=0.025];
\node at (90-5.2*22.5:\s) {\scriptsize $a_j$};
\end{tikzpicture}
\] 
\caption{The (partial) correspondence between $\widetilde{\mc M}_{S}(n)$ and $\widetilde{\mc M}_{S'}(n)$. This removes a crossing when $a_{j-1} < b < a_j$, and creates a crossing when $a_j+1 < b < a_{j+1}$.}
\label{fig:corresp}
\end{figure}
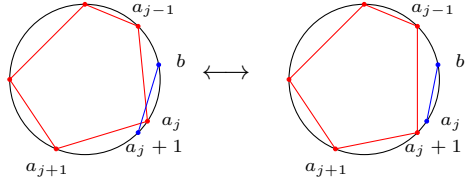

Let $M$ be the set of $(\rho, r) \in \widetilde{\mc M}_{S}(n)$ with no corresponding element in $\widetilde{\mc M}_{S'}(n)$, and let $M'$ similarly be the set of $(\rho, r) \in \widetilde{\mc M}_{S'}(n)$ with no corresponding element in $\widetilde{\mc M}_{S}(n)$. Note that the left-hand side of \eqref{eq:recursion} is just equal to $|M'| - |M|$.

We first count the size of $M$. The only way for $(\rho, r)$ to be an element of $M$ is if switching $a_j$ and $a_j+1$ in $\rho$ causes an edge to be deleted, and as a result there is a new source other than $S'$. The deleted edge must have been from $S$ to a vertex of the form $\{b, a_j+1\}$, for $a_{j-1} < b < a_j$ (and conversely, any edge of such a form will have been deleted when switching $a_j, a_j+1$). Thus, the elements of $M$ are exactly those $(\rho, r)$ where $\{b, a_j+1\}$ is a block in $\rho$ for some $a_{j-1} < b < a_j$ and where all edges are directed away from $\{b, a_j+1\}$ except the one from $S$. 

Now, note that a block forms a crossing with $S$ or $\{b, a_j+1\}$ exactly when it forms a crossing with $S \cup \{b, a_j+1\}$. (This is evident visually from \cref{fig:corresp}.)
Thus, contracting the edge between $S$ and $\{b, a_j+1\}$ in $G(\rho)$ gives the crossing graph of the partition $\rho'$ obtained by combining $S$ and $\{b, a_j+1\}$ in $\rho$. Since $S$ and $\{b, a_j+1\}$ are the only two sources in the orientation $r$ (disregarding the edge between the two), the corresponding orientation $r'$ of $\rho'$ is also acyclic and has $S \cup \{b, a_j+1\}$ as the only source. Thus, $(\rho', r') \in \widetilde{\mc M}_{S \cup \{b, a_j+1\}}(n)$. We can recover $(\rho, r)$ from $(\rho', r')$ by replacing $S \cup \{b, a_j+1\}$ with $S$ and $\{b, a_j+1\}$, directing all edges from each of them outward, and directing the edge between them away from $S$. Thus, we have a bijection between $M$ and the union of $\widetilde{\mc M}_{S \cup \{b, a_j+1\}}(n)$ over all $b$, so
\begin{align*}
|M| &= \sum_{a_{j-1} < b < a_j} |\widetilde{\mc M}_{S \cup \{b, a_j+1\}}(n)| \\
&= \sum_{a_{j-1} < b < a_j} A_k(S \cup \{b, a_j+1\}) \\
&= \sum_{a_{j-1} < b < a_j} A_k(\{a_1, \dots, a_{j-1}, b, a_j, a_j+1, a_{j+1}, \dots, a_m\}) \\
&= \sum_{a_{j-1} < b < a_j} A_k(\{a_1, \dots, a_{j-1}, b, a_j, a_{j+1}-1, \dots, a_m-1\}) & \text{(by the merging property).}
\end{align*}
By an identical argument, we have
\begin{align*}
|M'| &= \sum_{a_j+1 < b < a_{j+1}} |\widetilde{\mc M}_{S \cup \{a_j, b\}}(n)| \\
&= \sum_{a_j+1 < b < a_{j+1}} A_k(S \cup \{a_j, b\}) \\
&= \sum_{a_j+1 < b < a_{j+1}} A_k(\{a_1, \dots, a_{j-1}, a_j, a_j+1, b, a_{j+1}, \dots, a_m\}) \\
&= \sum_{a_j < b < a_{j+1}-1} A_k(\{a_1, \dots, a_{j-1}, a_j, b, a_{j+1}-1, \dots, a_m-1\}).
\end{align*}
Thus, $|M'| - |M|$ equals the right hand side of \eqref{eq:recursion}, as desired.
\end{proof}

We now use \cref{prop:recursion} to prove one final property, which is that we can reflect $a_j$ over the midpoint of $a_{j-1}, a_{j+1}$. As a consequence, $A_{k+1}(S)$ depends only on the (unordered) multiset of differences $\{a_2-a_1, a_3-a_2, \dots, a_{m+1}-a_m\}$ (recall that the last difference here equals $(a_1+n) - a_m$).

\begin{fact}[Reflection]
Suppose $m \ge 2$. Then, for all $j$, \[A_{k+1}(\{a_1, \dots, a_{j-1}, a_j, a_{j+1}, \dots, a_m\}) = A_{k+1}(\{a_1, \dots, a_{j-1}, a_{j+1}+a_{j-1}-a_j, a_{j+1}, \dots, a_m\}).\]
\[
A_{k+1}
\begin{tikzpicture}[baseline=0]
\def \r {1};
\def \s {1.3};
\draw (0, 0) circle (\r);
\draw[red] (90-0*22.5:\r) -- (90-2*22.5:\r) -- (90-6.5*22.5:\r) -- (90-9*22.5:\r) -- (90-12*22.5:\r) -- (90-0*22.5:\r);
\foreach \d/\l in {{90-0*22.5}/{}, {90-2*22.5}/{$a_{j-1}$}, {90-6.5*22.5}/{$a_j$}, {90-9*22.5}/{$a_{j+1}$}, {90-12*22.5}/{}} {
    \draw[fill, red] (\d:\r) circle [radius=0.025];
    \node at (\d:\s) {\scriptsize \l};
}
\end{tikzpicture}
\quad = \quad A_{k+1}
\begin{tikzpicture}[baseline=0]
\def \r {1};
\def \s {1.3};
\draw (0, 0) circle (\r);
\draw[red] (90-0*22.5:\r) -- (90-2*22.5:\r) -- (90-4.5*22.5:\r) -- (90-9*22.5:\r) -- (90-12*22.5:\r) -- (90-0*22.5:\r);
\foreach \d/\l in {{90-0*22.5}/{}, {90-2*22.5}/{$a_{j-1}$}, {90-4.5*22.5}/{$a_j$}, {90-9*22.5}/{$a_{j+1}$}, {90-12*22.5}/{}} {
    \draw[fill, red] (\d:\r) circle [radius=0.025];
    \node at (\d:\s) {\scriptsize \l};
}
\end{tikzpicture}
\]
\end{fact}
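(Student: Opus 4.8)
The plan is to freeze every element of $S$ except $a_j$ and to study $A_{k+1}$ as a function of that single coordinate. Write
\[
f(x) = A_{k+1}(\{a_1,\dots,a_{j-1},x,a_{j+1},\dots,a_m\}), \qquad a_{j-1} < x < a_{j+1},
\]
and set $c = (a_{j-1}+a_{j+1})/2$. The Reflection fact is exactly the assertion $f(x) = f(2c-x)$, i.e.\ that $f$ is symmetric about its midpoint. I would prove this by induction on the subscript. The base case is $A_1$, where $n=|S|$ forces $S$ to be the whole ground set, so the crossing graph is a single vertex and $A_1 \equiv 1$; symmetry is then trivial. For the inductive step I assume the Reflection fact for $A_k$, which combined with the Rotation fact is equivalent to the statement that $A_k$ depends only on the multiset of cyclic gaps of its argument (reflecting one point swaps two adjacent gaps, and these transpositions together with cyclic rotation generate all rearrangements of the gap sequence).

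First I would reduce symmetry of $f$ to antisymmetry of its first differences $g(x) := f(x+1)-f(x)$. By \cref{prop:recursion} (applied with $a_j = x$) we have $g(x) = P(x) - Q(x)$, where
\[
P(x) = \sum_{x < b < a_{j+1}-1} A_k(\{a_1,\dots,a_{j-1},x,b,a_{j+1}-1,\dots,a_m-1\}), \quad Q(x) = \sum_{a_{j-1} < b < x} A_k(\{a_1,\dots,a_{j-1},b,x,a_{j+1}-1,\dots,a_m-1\}).
\]
I claim that $P(x) = Q(2c-1-x)$ and $Q(x) = P(2c-1-x)$; granting this, $g(x) = -g(2c-1-x)$, so $g$ is antisymmetric about $c-\tfrac12$. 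Then the telescoping identity $f(2c-x)-f(x) = \sum_{t=x}^{2c-x-1} g(t)$ pairs each term $g(t)$ with $g(2c-1-t) = -g(t)$, so the terms cancel in pairs; any fixed point $t = c-\tfrac12$ (which occurs precisely when $a_{j-1}+a_{j+1}$ is odd) satisfies $g(t) = -g(t) = 0$. Hence $f(2c-x) = f(x)$, as desired.

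It remains to prove the claim, and this is the crux. Writing $y = 2c-1-x = a_{j-1}+a_{j+1}-1-x$, I would exhibit the bijection $b \mapsto b' = a_{j-1}+b-x$ between the index set $\{x < b < a_{j+1}-1\}$ of $P(x)$ and the index set $\{a_{j-1} < b' < y\}$ of $Q(y)$, and check that corresponding summands are equal. The two summands are values of $A_k$ on configurations that agree outside the arc from $a_{j-1}$ to $a_{j+1}-1$; inside that arc, the term of $P(x)$ contributes the three consecutive gaps $\{x-a_{j-1},\, b-x,\, (a_{j+1}-1)-b\}$, while the term of $Q(y)$ contributes $\{b'-a_{j-1},\, y-b',\, (a_{j+1}-1)-y\}$. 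A direct computation using $b' = a_{j-1}+b-x$ and $y = a_{j-1}+a_{j+1}-1-x$ shows $(a_{j+1}-1)-y = x-a_{j-1}$, $b'-a_{j-1} = b-x$, and $y-b' = (a_{j+1}-1)-b$, so the two multisets coincide. Thus the two configurations share the same cyclic gap multiset, and the inductive hypothesis equates their $A_k$-values. Summing over the bijection gives $P(x) = Q(y)$, and $Q(x) = P(y)$ follows by applying this with $x$ replaced by $y$.

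I expect the matching of the two sums to be the main obstacle. The subtlety is that the correct comparison is \emph{not} a global reflection of the circle (which would move the other points of $S$ and break the correspondence) but rather a purely local equality of gap multisets inside the arc $(a_{j-1}, a_{j+1})$, which is exactly what the inductive Reflection hypothesis for $A_k$ supplies. The remaining technical points are checking that the index bijection $b \mapsto a_{j-1}+b-x$ lands in the correct range (including the degenerate cases where one or both sums are empty) and handling the parity of $a_{j-1}+a_{j+1}$ in the telescoping step.
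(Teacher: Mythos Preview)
Your proposal is correct and follows essentially the same approach as the paper: both arguments induct on $k$, invoke \cref{prop:recursion} to express the relevant first difference as a difference of two sums of $A_k$-values, and then match those sums termwise using the inductive Reflection hypothesis. The only differences are cosmetic: the paper packages the telescoping step as a second induction on $a_j$ (with base case $a_j=a_{j-1}+1$ via merging), and it matches the summands by two explicit reflections rather than by your equivalent ``equal gap multiset'' formulation.
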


\begin{proof}
We induct on $k$ and $a_j$. The $k=0$ case is obvious, and the $a_j = a_{j-1} + 1$ case follows from the merging property. Thus, suppose that $k > 0, a_j > a_{j-1}+1$ and assume the statement is true for $k-1$ and, fixing $k$, also assume it is true for $a_j-1$. We then apply \cref{prop:recursion} to $\{a_1, \dots, a_{j-1}, a_j-1, a_{j+1}, \dots, a_m\}$ and $\{a_1, \dots, a_{j-1}, a_{j+1}+a_{j-1}-a_j, a_{j+1}, \dots, a_m\}$ to get the following:

\begin{align}
&A_{k+1}(\{a_1, \dots, a_{j-1}, a_j, a_{j+1}, \dots, a_m\}) \nonumber\\
&\qquad \qquad = A_{k+1}(\{a_1, \dots, a_{j-1}, a_j-1, a_{j+1}, \dots, a_m\}) \nonumber\\
&\qquad \qquad \qquad + \sum_{a_j-1 < b < a_{j+1}-1} A_k(\{a_1, \dots, a_{j-1}, a_j-1, b, a_{j+1}-1, \dots, a_m-1\})  \nonumber\\
&\qquad \qquad \qquad - \sum_{a_{j-1} < b < a_j-1} A_k(\{a_1, \dots, a_{j-1}, b, a_j-1, a_{j+1}-1, \dots, a_m-1\}), \label{eq:refl1}
\end{align} \\
\begin{align}
&A_{k+1}(\{a_1, \dots, a_{j-1}, a_{j+1}+a_{j-1}-a_j, a_{j+1}, \dots, a_m\}) \nonumber\\
&\qquad \qquad = A_{k+1}(\{a_1, \dots, a_{j-1}, a_{j+1}+a_{j-1}-a_j+1, a_{j+1}, \dots, a_m\}) \nonumber\\
&\qquad \qquad \qquad - \sum_{a_{j+1}+a_{j-1}-a_j < b < a_{j+1}-1} A_k(\{a_1, \dots, a_{j-1}, a_{j+1}+a_{j-1}-a_j, b, a_{j+1}-1, \dots, a_m-1\})  \nonumber\\
&\qquad \qquad \qquad + \sum_{a_{j-1} < b < a_{j+1}+a_{j-1}-a_j} A_k(\{a_1, \dots, a_{j-1}, b, a_{j+1}+a_{j-1}-a_j, a_{j+1}-1, \dots, a_m-1\}). \label{eq:refl2}
\end{align}

By the inductive hypothesis (for $a_j-1$), we have
\begin{align*}
A_{k+1}(\{a_1, \dots, a_{j-1}, a_j-1, a_{j+1}, \dots, a_m\}) = A_{k+1}(\{a_1, \dots, a_{j-1}, a_{j+1}+a_{j-1}-a_j+1, a_{j+1}, \dots, a_m\}),
\end{align*}
and by two successive applications each of the inductive hypothesis (for $k-1$), we also have
\begin{align*}
&A_k(\{a_1, \dots, a_{j-1}, a_j-1, b, a_{j+1}-1, \dots, a_m-1\}) \\
&\qquad = A_k(\{a_1, \dots, a_{j-1}, b+a_{j-1}-a_j+1, b, a_{j+1}-1, \dots, a_m-1\}) \\
&\qquad = A_k(\{a_1, \dots, a_{j-1}, b+a_{j-1}-a_j+1, a_{j+1}+a_{j-1}-a_j, a_{j+1}-1, \dots, a_m-1\}), \\
&A_k(\{a_1, \dots, a_{j-1}, b, a_j-1, a_{j+1}-1, \dots, a_m-1\}) \\
&\qquad = A_k(\{a_1, \dots, a_{j-1}, b, b+a_{j+1}-a_j, a_{j+1}-1, \dots, a_m-1\}) \\
&\qquad = A_k(\{a_1, \dots, a_{j-1}, a_{j+1}+a_{j-1}-a_j, b+a_{j+1}-a_j, a_{j+1}-1, \dots, a_m-1\}).
\end{align*}
Thus, we can match the terms in the right hand side of \eqref{eq:refl1} with those of \eqref{eq:refl2}, so 
\[A_{k+1}(\{a_1, \dots, a_{j-1}, a_j, a_{j+1}, \dots, a_m\}) = A_{k+1}(\{a_1, \dots, a_{j-1}, a_{j+1}+a_{j-1}-a_j, a_{j+1}, \dots, a_m\}),\]
as desired.
\end{proof}

We now show the main theorem. In fact we will show the following which is a generalization of \cref{thm:main}.

\begin{thm}
Let $m \ge 2$, and fix all $a_i$ except $a_j$. Then, $A_{k+1}(\{a_1, \dots, a_{j-1}, a_j, a_{j+1}, \dots, a_m\})$ forms a symmetric and unimodal sequence in $a_j$, as $a_j$ ranges in the interval $(a_{j-1}, a_{j+1})$.
\end{thm}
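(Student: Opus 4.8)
The plan is to establish the two assertions separately, with symmetry being immediate and unimodality carrying the real content. Symmetry is exactly the Reflection fact: reflecting $a_j$ to $a_{j-1}+a_{j+1}-a_j$ leaves $A_{k+1}$ unchanged, so the sequence is symmetric about the midpoint $(a_{j-1}+a_{j+1})/2$. Because a symmetric sequence is unimodal precisely when its consecutive differences are nonnegative up to the center, it suffices to fix all $a_i$ except $a_j$ and show that $A_{k+1}(\{\dots,a_j+1,\dots\})-A_{k+1}(\{\dots,a_j,\dots\})\ge 0$ whenever $a_j$ lies strictly below the midpoint, i.e.\ whenever $d:=a_j-a_{j-1}<e:=a_{j+1}-a_j$.

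I would work throughout in gap coordinates: by the consequence of the Reflection fact, $A_k(S)$ depends only on the multiset of cyclic gaps, so I write $\Phi(T)$ for the common value of $A_k$ over all configurations with gap multiset $T$. Applying \cref{prop:recursion} and tracking gaps through the merging property, the difference above becomes
\[
A_{k+1}(\{\dots,a_j+1,\dots\})-A_{k+1}(\{\dots,a_j,\dots\})=\sum_{p=1}^{e-2}\Phi(R\cup\{d,p,e-1-p\})-\sum_{q=1}^{d-1}\Phi(R\cup\{e-1,q,d-q\}),
\]
where $R$ is the fixed multiset of all gaps other than $d$ and $e$. Setting $N:=d+e-1$, both sums have the form $\sum\Phi(R\cup\{\text{triple summing to }N\})$, the first ranging over triples with a part equal to $d$ and the second over triples with a part equal to $e-1$. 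Defining $H(v):=\sum_{q+r=N-v}\Phi(R\cup\{v,q,r\})$ (ordered pairs $(q,r)$), the identity reads $H(d)-H(e-1)$, so the goal is exactly $H(x)\ge H(N-x)$ for $x\le N/2$ (take $x=d\le e-1=N-x$).

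I would prove this by induction on $k$, the base case $k=0$ being trivial since $A_1\equiv 1$. The key consequence of the inductive hypothesis is a \emph{rebalancing} estimate: applying the theorem for $A_k$ to the point sitting between two adjacent gaps $u,v$ of fixed sum shows $\Phi$, as a function of that pair, is symmetric and unimodal, hence depends only on $|u-v|$ and is nonincreasing in it; since $\Phi$ sees only the gap multiset, this applies to any two parts of a triple. I would then compare $H(N-x)$ and $H(x)$ term by term via the index-preserving injection $j\mapsto j$: for each $j=1,\dots,x-1$ the triples $\{N-x,j,x-j\}$ and $\{x,j,N-x-j\}$ share the part $j$, and their remaining pairs $\{N-x,x-j\}$ and $\{x,N-x-j\}$ have equal sum $N-j$. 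The elementary inequality $|2x+j-N|\le N+j-2x$, valid precisely because $x\le N/2$ and $j\ge 0$, shows the second pair is at least as balanced, so $\Phi(R\cup\{x,j,N-x-j\})\ge\Phi(R\cup\{N-x,j,x-j\})$ by rebalancing. Summing over $j$ and discarding the remaining nonnegative terms of $H(x)$ (those with index $j\ge x$, which exist because $x\le N/2$) yields $H(x)\ge H(N-x)$, completing the induction.

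The main obstacle is this last comparison of the two sums: they have different lengths and differently indexed summands, so the content lies in choosing an injection that is simultaneously dominating term by term. The rebalancing estimate is what makes the natural injection work, and verifying that it genuinely produces a more balanced pair (the inequality $|2x+j-N|\le N+j-2x$) is the crux. I expect the only other delicate points to be the bookkeeping of gaps through \cref{prop:recursion} needed to reach the displayed identity, and the handling of the center according to the parity of $a_{j+1}-a_{j-1}$; both are routine.
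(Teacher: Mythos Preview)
Your proof is correct and follows essentially the same approach as the paper: both argue by induction on $k$, apply \cref{prop:recursion} to express the consecutive difference as a difference of two sums, inject the shorter (negative) sum into the longer (positive) one term by term, and invoke the inductive hypothesis (your ``rebalancing'' estimate) together with the reflection property to compare each matched pair. Your gap-multiset formulation via $\Phi$ and $H$ is a clean repackaging of the paper's explicit point-coordinate manipulations, but the underlying pairing and the key inequality $|2x+j-N|\le N+j-2x$ are the same.
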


\begin{proof}
Symmetry follows from the reflection property. We now induct on $k$; the $k=0$ case is obvious because there is only one possibility for $a_j$. Thus assume the statement is true for $k-1$.

Without loss of generality, by symmetry assume $a_{j+1}-a_j \ge a_j - a_{j-1}$; we will show that if $a_j \ge a_{j-1}+2$, then
\[A_{k+1}(\{a_1, \dots, a_{j-1}, a_j, a_{j+1}, \dots, a_m\}) \ge A_{k+1}(\{a_1, \dots, a_{j-1}, a_j-1, a_{j+1}, \dots, a_m\}).\]
Indeed, by \cref{prop:recursion}, we have
\begin{align*}
&A_{k+1}(\{a_1, \dots, a_{j-1}, a_j, a_{j+1}, \dots, a_m\}) - A_{k+1}(\{a_1, \dots, a_{j-1}, a_j-1, a_{j+1}, \dots, a_m\}) \\
={}& \sum_{a_j-1 < b < a_{j+1}-1} A_k(\{a_1, \dots, a_{j-1}, a_j-1, b, a_{j+1}-1, \dots, a_m-1\})  \\
&\qquad - \sum_{a_{j-1} < b < a_j-1} A_k(\{a_1, \dots, a_{j-1}, b, a_j-1, a_{j+1}-1, \dots, a_m-1\}) \\
\ge{}& \sum_{0 < c < a_j - a_{j-1} - 1} \Big(A_k(\{a_1, \dots, a_{j-1}, a_j-1, a_{j+1}-1-c, a_{j+1}-1, \dots, a_m-1\}) \\
&\qquad \qquad \qquad \qquad - A_k(\{a_1, \dots, a_{j-1}, a_{j-1}+c, a_j-1, a_{j+1}-1, \dots, a_m-1\})\Big) \\
\ge{}& \sum_{0 < c < a_j - a_{j-1} - 1} \Big(A_k(\{a_1, \dots, a_{j-1}, a_j-1, a_{j+1}-1-c, a_{j+1}-1, \dots, a_m-1\}) \\
&\qquad \qquad \qquad \qquad - A_k(\{a_1, \dots, a_{j-1}, a_{j}-1-c, a_j-1, a_{j+1}-1, \dots, a_m-1\})\Big) \\
\ge{}& \sum_{0 < c < a_j - a_{j-1} - 1} \Big(A_k(\{a_1, \dots, a_{j-1}, a_j-1, a_{j+1}-1-c, a_{j+1}-1, \dots, a_m-1\}) \\
&\qquad \qquad \qquad \qquad - A_k(\{a_1, \dots, a_{j-1}, a_{j}-1-c, a_{j+1}-1-c, a_{j+1}-1, \dots, a_m-1\})\Big) \\
\ge{}& \sum_{0 < c < a_j - a_{j-1} - 1} 0 \\ ={}& 0,
\end{align*}
where the last inequality is by the inductive hypothesis, using the fact that $a_j-1$ is closer to the center of the interval $(a_{j-1}, a_{j+1}-1-c)$ than $a_j-1-c$. (This is because $(a_j-1)-a_{j-1} > (a_j-1-c)-a_{j-1}$ and $(a_{j+1}-1-c) - (a_j-1) > (a_j-1-c)-a_{j-1}$, where the latter follows from the assumption that $a_{j+1}-a_j \ge a_j - a_{j-1}$.)

Thus the induction is complete and we are done.
\end{proof}

\section{Conclusion}
The main remaining open question is Defant, Engen, and Miller's conjecture of log-concavity:
\begin{conj}[\cite{dem}] \label{conj:main}
The sequence $(A_{k+1}(\ell))_{1 \le \ell \le 2k+1}$ is log-concave for every nonnegative integer $k$.
\end{conj}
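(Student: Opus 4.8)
The plan is to attack \cref{conj:main} by proving the stronger statement that the generalized quantity $A_{k+1}(\{a_1,\dots,a_m\})$ is \emph{log-concave} in each coordinate $a_j$ (with the remaining $a_i$ held fixed), as $a_j$ ranges over the integers in the interval $(a_{j-1},a_{j+1})$; specializing to $m=2$ and $S=\{0,\ell\}$ recovers the conjecture. As in the proof of unimodality, I would induct on $k$, using \cref{prop:recursion} to reduce a statement about $A_{k+1}$ to statements about $A_k$, for which the inductive hypothesis (now log-concavity rather than mere unimodality) would be available; the reflection and merging properties would again be used freely to normalize the varying coordinate, and symmetry would restrict attention to one half of the range.

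Concretely, write $f(x) = A_{k+1}(\{a_1,\dots,a_{j-1},x,a_{j+1},\dots,a_m\})$. Log-concavity of $f$ is equivalent to the assertion that the ratios $f(x+1)/f(x)$ are nonincreasing, i.e. $f(x)^2 \ge f(x-1)f(x+1)$ for all interior $x$. \cref{prop:recursion} expresses the first difference as a signed sum,
\[ f(x+1)-f(x) = P(x) - Q(x), \]
where $P(x) = \sum_{x<b<a_{j+1}-1} A_k(\dots)$ and $Q(x) = \sum_{a_{j-1}<b<x} A_k(\dots)$ are sums of $A_k$-values that, by the inductive hypothesis, are themselves log-concave in their free coordinate $b$. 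The goal would be to bound $f(x)^2 - f(x-1)f(x+1)$ from below using the two differences $f(x+1)-f(x)=P(x)-Q(x)$ and $f(x)-f(x-1)=P(x-1)-Q(x-1)$ together with the log-concavity of the summands, in the same telescoping spirit that established unimodality. Since $P(x)$ and $Q(x)$ are complementary partial sums counting the configurations gained and lost from the two sides, the heart of the matter is an interlacing-type balance between how $P$ and $Q$ evolve as $x$ moves.

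The main obstacle is that \cref{prop:recursion} controls first differences \emph{additively}, whereas log-concavity is a \emph{multiplicative} second-order condition: knowing that $f(x+1)-f(x)$ has a definite sign, as in the unimodality argument, says nothing about how this difference compares to $f(x)$ itself. To close this gap I would try to strengthen the inductive invariant to a multiplicative one preserved by the operations in play, for instance a two-variable or total log-concavity for $A_{k+1}(S)$ that simultaneously controls $f$ and the quantities $P,Q$. Failing a clean inductive closure, two fallback routes seem natural. The first is a direct injection establishing $A_{k+1}(\ell-1)\,A_{k+1}(\ell+1) \le A_{k+1}(\ell)^2$: given $(\rho_-,r_-)\in\widetilde{\mc M}_{\{0,\ell-1\}}(2k+2)$ and $(\rho_+,r_+)\in\widetilde{\mc M}_{\{0,\ell+1\}}(2k+2)$, one superimposes the two matchings on the common ground set, obtaining an alternating path-and-cycle structure through the shared vertex $0$, and switches so as to move both root endpoints to $\ell$, producing an injective image in $\widetilde{\mc M}_{\{0,\ell\}}(2k+2)^{\,2}$; here the delicate point is preserving acyclicity and root-connectedness of the orientations under the switch. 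The second route is to prove that the generating polynomial $\sum_{\ell} A_{k+1}(\ell)\,x^{\ell}$ is real-rooted, which implies log-concavity, by deriving an interlacing relation between consecutive values of $k$ from \cref{prop:recursion}. I expect the orientation-preservation step in the injection, or equivalently the passage from additive sign-control to multiplicative control in the induction, to be the crux; the computational confirmation for $k\le 53$ suggests the statement is robust enough that one of these strengthenings should go through.
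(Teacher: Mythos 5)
You have not proved anything here, and neither does the paper: the statement you were given is \cref{conj:main}, which the paper explicitly leaves open, verifying it only for $k \le 53$ by computer (using \cref{prop:recursion} to speed up the computation of the $A_{k+1}(\ell)$). Your proposal is a research plan with three candidate strategies, and in each case the step you yourself flag as ``the crux'' is exactly where the argument stops, so there is no proof to certify. The central gap in your main route is concrete: the unimodality proof succeeds because \cref{prop:recursion} writes $f(x+1)-f(x)$ as a difference of two sums of $A_k$-values, and the reflection property lets one pair the terms of $Q(x)$ injectively with terms of $P(x)$ so that each paired difference is nonnegative by the inductive hypothesis --- a purely term-by-term, sign-level argument. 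For log-concavity you need $f(x)^2 - f(x-1)f(x+1) \ge 0$, which upon substituting $f(x\pm 1) = f(x) \pm (P - Q)$-type expressions becomes an inequality among \emph{products} of the partial sums $P(x), Q(x), P(x-1), Q(x-1)$ evaluated at different arguments. No term-by-term pairing exists for such cross products, and the inductive hypothesis that each summand $A_k(\cdots, b, \cdots)$ is log-concave in $b$ does not transfer: pointwise sums of log-concave sequences are not log-concave in general (unlike convolutions), and here the summation limits move with $x$ as well. So ``strengthening the inductive invariant'' is not a refinement of the existing induction but a genuinely new problem, and you give no candidate invariant that is actually preserved by \cref{prop:recursion}.

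Your two fallbacks are likewise unexecuted at their decisive points. For the superposition injection, swapping along the alternating path through $0$ changes the crossing graph globally (edges of $G(\rho_\pm)$ appear and disappear non-locally), and you offer no mechanism for transporting the orientations $r_-, r_+$ so that the results remain acyclic with unique source $\{0,\ell\}$; this is precisely the kind of difficulty that makes injective proofs of log-concavity hard, and asserting that ``one of these strengthenings should go through'' is not evidence. For real-rootedness of $\sum_\ell A_{k+1}(\ell) x^\ell$, note this is strictly stronger than \cref{conj:main}, and you derive no interlacing relation from \cref{prop:recursion}; since the recursion relates $A_{k+1}(S)$ to $A_k$ on sets with an extra element (i.e., it leaves the family $m=2$), it is not even in the standard form to which interlacing machinery applies. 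A salvageable contribution along your lines would be the intermediate claim that $A_{k+1}(\{a_1,\dots,a_m\})$ is log-concave in $a_j$ for general $m$; that is a plausible strengthening worth testing computationally (the paper's $k \le 53$ check covers only $m=2$), but as written your proposal establishes neither it nor the original conjecture.
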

It turns out that the recursion \cref{prop:recursion} actually allows us to more efficiently compute the sequence elements $A_{k+1}(\ell)$. Using this recursion, we used a computer program to verify that \cref{conj:main} is true for all $k \le 53$. However, the problem remains open for large $k$.

\section{Acknowledgments}
This research was conducted at the University of Minnesota, Duluth REU and was supported by NSF-DMS grant 1949884 and NSA grant H98230-20-1-0009. Thanks to Joe Gallian for running the REU program and providing helpful comments. Thanks also to Colin Defant for suggesting the problem and for useful comments.

\printbibliography[heading=bibintoc]
\end{document}